\tikzset{
	convexset/.style = {line width = 0.5 pt, fill=lightgray, opacity=0.3},
	ext/.style = {circle, inner sep=0pt, minimum size=2pt, fill=black},
	segment/.style = {line width = 0.75 pt}
}
\theoremstyle{plain}
\newtheorem{theorem}{Theorem}[section]
\newtheorem{proposition}[theorem]{Proposition}
\newtheorem{corollary}[theorem]{Corollary}
\theoremstyle{definition}
\newtheorem{definition}[theorem]{Definition}
\newtheorem{remark}[theorem]{Remark}
\newtheorem{example}[theorem]{Example}
\newcommand{\dist}{d}
\newcommand{\R}{\mathbb{R}}
\newcommand{\N}{\mathbb{N}}
\newcommand{\maxi}{\mathrm{Max}}
\newcommand{\stmax}{\mathrm{StMax}}
\newcommand{\pos}{\mathrm{Pos}}
\newcommand{\inn}{\mathrm{inn}\,}
\renewcommand{\epsilon}{\varepsilon}
\renewcommand{\phi}{\varphi}
\author[A.~Daniilidis]{Aris Daniilidis}
\address{Institute of Statistics and Mathematical Methods in Economics, E105-04, TU Wien, Wiedner Hauptstra{\ss }e 8, A-1040 Wien}
\email{aris.daniilidis@tuwien.ac.at}
\author[C.A.~De~Bernardi]{Carlo Alberto De Bernardi}
\address{Dipartimento di Matematica per le Scienze economiche, finanziarie ed attuariali, Universit\`a Cattolica del Sacro Cuore, Via Necchi 9, 20123 Milano} 
\email{carloalberto.debernardi@unicatt.it} \email{carloalberto.debernardi@gmail.com}
\author{Enrico Miglierina}
\address{Dipartimento di Matematica per le Scienze economiche, finanziarie ed attuariali, Universit\`{a} Cattolica del Sacro Cuore, Via Necchi 9, 20123 Milano.} 
\email{enrico.miglierina@unicatt.it}
\title[ABB theorems:  results and limitations in infinite dimensions]{ABB theorems:  results and limitations\medskip \\ in infinite dimensions}
 \subjclass[2020]{Primary 46B20, 46N10 ; Secondary 58E17, 90C29}
\keywords{ABB theorem, efficient point, positive functional, density}
\begin{document}

\begin{abstract} We construct a weakly compact convex subset of $\ell^{2}$ with non\-empty interior that has an isolated maximal element, with respect to the lattice order~$\ell _{+}^{2}$. Moreover, the maximal point cannot be supported by any strictly positive functional, showing that the Arrow-Barankin-Blackwell theorem fails. This example discloses the pertinence of the assumption that the cone has a bounded base for the validity of the result in infinite dimensions. Under this latter assumption, the equivalence of the notions of strict maximality and maximality is established.
\end{abstract}

\maketitle

\section{Introduction}

Let $X$ be a Banach space and $P$  a closed convex cone with base
$\mathcal{B}$, that is, $\mathcal{B}$ is a closed convex subset of $P$ with
the property that every nonzero element $x\in P$ can be represented in a
unique way in the form $x=\lambda b,$ with $\lambda>0$ and $b\in\mathcal{B}$.
Notice that this is equivalent to the existence of a functional $f\in X^{\ast}$ that takes strictly positive values on $P\setminus\{0\}$. In what follows we
denote the set of strictly positive functionals on $P$ by 
$$\mathrm{inn}P^{\ast}=\bigl\{f\in X^*;\, f(u)>0\ \forall u\in P\setminus\{0\} \bigr\}.$$
Nonemptiness of the set $\mathrm{inn}P^{\ast}$ clearly implies (and in finite
dimensions is equivalent to) the fact that $P$ is a pointed cone, that is,
$P\cap(-P)=\emptyset$. Moreover, nonemptiness of the interior of $\mathrm{inn}P^*$ is equivalent to the existence of a bounded base for the cone~$P$.\smallskip\newline 
The cone $P$ defines a partial order on $X$ as follows: $x\succeq y\Longleftrightarrow x-y\in P$. In case
$X=\mathbb{R}^{n}$ and $P=\mathbb{R}_{+}^{n}$, the famous
Arrow-Barankin-Blackwell theorem (in short, ABB theorem) asserts that every
maximal element of a compact convex set can be approximated by positive elements, that is, points that support the set by means of a strictly positive
functional, see~\cites{ABB53, Peleg72}. The result has a pertinent economic interpretation: every optimal allocation of commodities can be approximated by allocations that are supported by a nontrivial system of prices (\cites{ABB53, Econometrica}).
Because of its importance, a lot of effort has been devoted to extensions of the ABB result in infinite dimensional spaces. The most general result can be announced in locally convex topological vector spaces and ensures the density of the positive elements in the set of maximal elements of every convex compact set (Theorem~\ref{Thm-ABB}). Therefore, if $X$ is a Banach space equipped with a cone $P$ and $K$ is a compact (respectively, weakly compact) convex subset of~$X$, then every maximal element of $K$ can be strongly (respectively, weakly) approximated by a sequence of positive elements. Moreover, if the cone~$P$ has a bounded base, then the approximation is always strong, even if~$K$ is merely weakly compact. The same conclusion obviously holds under the assumption that the maximal element to be approximated is a point of continuity of the identity map of $K$ from the weak to the norm topology (Corollary~\ref{cor:PC}). Notwithstanding, until now, the degree of necessity of these assumptions was not made sufficiently clear. \smallskip\newline 
In this article we construct an example showing that strong approximation fails in general even for the classical separable Hilbert space $\ell^{2}$ equipped with its lattice cone~$\ell_{+}^{2}$ (Proposition~\ref{example: max isolated}). Our counterexample concerns a weakly compact convex set with nonempty interior, framework in which the Hahn-Banach separation theorem applies, outlining that the problem stems from the fact that the lattice cone~$\ell_{+}^{2}$ does not have a bounded base. Indeed, assuming that $P$ has a bounded base would guarantee that a strong approximation ABB result holds for any weakly compact set. The current work shows that this assumption (or some variant of it) is essentially necessary.\smallskip\newline
Let us quote some equivalent forms of the assumption that the cone has a bounded base, that have already been employed in the literature, see e.g. \cites{Jahn88,Petschke90, GalSal93}: the cone~$P$ is of Bishop-Phelps type, the dual cone $P^{\ast}$ has nonempty interior, there exists a functional that strongly exposes $0$ in $P$ and finally that~$0$ is a point of continuity of the identity map of the cone~$P$ from the weak to the norm topology. We refer to~\cite{D2000} for a detailed discussion of the assumptions. We shall also show that under any of these assumptions, we can reinforce the notion of maximality: every maximal element is also strictly maximal, notion that relates to stability (see~\cite{CasMig10} and references therin).

\section{Notation and preliminaries}
Throughout this paper $E$ stands for a locally convex topological vector space, while $X$ denotes a Banach space with dual space $X^*$. The closed unit  ball, the   open unit ball, and the unit sphere of $X$ are denoted by $B_X$, $U_X$, and $S_X$, respectively. We also write $\delta_n\searrow 0^+$ to denote that $\delta_n >0$ and $\lim_{n\to\infty}\,\delta_n=0$.\smallskip\newline
We denote by
\begin{equation}
\mathrm{Max}(K,P)=\bigl\{x\in K:\,\{x\}=K\cap(x+P)\bigr\}\label{eq:max}
\end{equation}
the set of $P$-maximal elements of a nonempty subset $K$ of $X$, and by
\begin{equation}
\mathrm{Pos}(K,P)=\bigl\{x\in K:\exists f\in\mathrm{inn}\,P^{\ast},f(x)=\sup
f(K)\bigr\}\label{eq:pos}
\end{equation}
the set of its positive elements. \smallskip\newline

Let us mention that if $K$ is a (weakly) compact convex set, 
$\mathrm{Max}(K,P)$ is nonempty, as a consequence of Zorn lemma and a standard compactness argument. Nonemptiness of $\mathrm{Pos}
(K,P)$ is less obvious and will follow from forthcoming Theorem~\ref{Thm-ABB}.  \smallskip
\newline In 1953, Arrow-Barankin-Blackwell, in \cite{ABB53}, established the density of the set
$\mathrm{Pos}(K,P)$ in $\mathrm{Max}(K,P),$ provided $K$ is a compact convex
subset of $\mathbb{R}^{n}$ and $P=\mathbb{R}_{+}^{n}$.
Since then this density result became relevant in Economic Theory, see \cite{Econometrica} for updated references. 
General ABB results have also been obtained in arbitrary Banach
spaces (see, e.g., \cites{Jahn88,Petschke90})
and later on in locally convex topological vector spaces  (\cites{Fu96,GalSal93}). The following theorem summarizes the previous results. The proof, which is essentially already known, is based on the notion of dilating cones (see \cite{BorZhu93}). For the convenience of the reader we provide a sketch of it.

\begin{theorem}
[Abstract density result]\label{Thm-ABB} Let $(E,\Im)$ be a locally convex
topological vector space, $K$ a $\Im$-compact convex subset of $E$, and $P$ a
closed convex cone with  base $\mathcal{B}$. Then
\begin{equation}
\mathrm{Pos}(K,P)\subseteq\mathrm{Max}(K,P)\subseteq\overline{\mathrm{Pos}
(K,P)}^{\Im}\label{eq:density}
\end{equation}

\end{theorem}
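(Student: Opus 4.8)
The plan is to prove the double inclusion in~\eqref{eq:density} separately. The first inclusion $\mathrm{Pos}(K,P)\subseteq\mathrm{Max}(K,P)$ is the elementary one. First I would take $x\in\mathrm{Pos}(K,P)$, so there is $f\in\mathrm{inn}\,P^{*}$ with $f(x)=\sup f(K)$. To show $x$ is maximal, suppose $y\in K\cap(x+P)$; then $y-x\in P$, so either $y=x$ or $f(y-x)>0$ by strict positivity of $f$. The latter would give $f(y)>f(x)=\sup f(K)$, a contradiction, forcing $y=x$ and hence $\{x\}=K\cap(x+P)$.

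The second inclusion $\mathrm{Max}(K,P)\subseteq\overline{\mathrm{Pos}(K,P)}^{\Im}$ is the substance of the ABB theorem and is where the dilating-cone technique enters. The idea is to fix a maximal point $x_{0}\in\mathrm{Max}(K,P)$ and a $\Im$-neighborhood $V$ of $x_{0}$, and to produce a positive point inside $V$. Since $P$ has a base $\mathcal{B}$, there is a functional $f\in\mathrm{inn}\,P^{*}$; the cone $P$ can then be enlarged to a family of larger closed convex cones $P_{\varepsilon}$ (the dilating cones, built from $f$ and a small $\varepsilon$) that shrink back to $P$ as $\varepsilon\to 0$, each still possessing a base and admitting a strictly positive functional. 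The key geometric step is that a $P_{\varepsilon}$-maximal element of $K$ is automatically a $P$-maximal element \emph{and}, because the dilated cone is ``fatter,'' it is supported by a strictly positive functional, i.e. it lies in $\mathrm{Pos}(K,P)$. Applying the existence of maximal elements (Zorn plus compactness) to the order induced by $P_{\varepsilon}$ on the $\Im$-compact set $K\cap(x_{0}+P)$, one obtains a point $x_{\varepsilon}\in\mathrm{Pos}(K,P)$ that dominates $x_{0}$ in the $P_{\varepsilon}$ order.

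It then remains to check that $x_{\varepsilon}\to x_{0}$ in the topology $\Im$ as $\varepsilon\to 0$, so that $x_{\varepsilon}\in V$ for $\varepsilon$ small and $x_{0}\in\overline{\mathrm{Pos}(K,P)}^{\Im}$. This convergence follows from the fact that $x_{\varepsilon}\in x_{0}+P_{\varepsilon}$ together with $\Im$-compactness of $K$: any $\Im$-cluster point $z$ of the net $(x_{\varepsilon})$ lies in $K$ and, by passing to the limit, satisfies $z\in x_{0}+P$, whence $z\in K\cap(x_{0}+P)=\{x_{0}\}$ by maximality of $x_{0}$, so the net converges to $x_{0}$.

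The main obstacle is the second inclusion, and within it the correct construction of the dilating cones $P_{\varepsilon}$: one must arrange that each $P_{\varepsilon}$ is a closed convex cone strictly larger than $P$ away from the origin, that it still admits a strictly positive functional (so that $P_{\varepsilon}$-maximal points are genuinely positive for $P$), and that $\bigcap_{\varepsilon>0}P_{\varepsilon}=P$ in a sense strong enough to force the limiting relation $z-x_{0}\in P$. A clean way to realize this is to set $P_{\varepsilon}=\{u\in X:\ \mathrm{dist}(u,P)\le\varepsilon\,f(u)\}$ or an analogous sublevel construction based on $f\in\mathrm{inn}\,P^{*}$; verifying that this family has the required closedness, convexity, and base properties is the technical heart of the argument, and since the excerpt notes the proof is ``essentially already known,'' I would cite~\cite{BorZhu93} for these properties and keep the presentation at the level of a sketch.
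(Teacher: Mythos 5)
Your overall strategy is the same as the paper's: the elementary first inclusion, then the dilating-cone technique for $\mathrm{Max}(K,P)\subseteq\overline{\mathrm{Pos}(K,P)}^{\Im}$, with compactness forcing the dilated maximal points to converge to the given maximal point and a separation/support argument placing each of them in $\mathrm{Pos}(K,P)$. There is, however, one step that fails as literally written: you apply Zorn's lemma ``to the order induced by $P_{\varepsilon}$ on the $\Im$-compact set $K\cap(x_{0}+P)$.'' But $x_{0}\in\mathrm{Max}(K,P)$ means precisely, by \eqref{eq:max}, that $K\cap(x_{0}+P)=\{x_{0}\}$, so this set is a singleton and the only point this step can produce is $x_{0}$ itself --- which need not lie in $\mathrm{Pos}(K,P)$ (if it always did, the theorem would be trivial). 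The correct set to maximize over is $K\cap(x_{0}+P_{\varepsilon})$, exactly as in the paper's $K_{n}=K\cap(\bar{x}+P_{n})$: take a $P_{\varepsilon}$-maximal point $x_{\varepsilon}$ of that compact convex set, and then check, using $P_{\varepsilon}+P_{\varepsilon}\subseteq P_{\varepsilon}$, that $x_{\varepsilon}$ is in fact $P_{\varepsilon}$-maximal in all of $K$; this last property is what the support argument needs. Since the remainder of your proof only uses $x_{\varepsilon}\in x_{0}+P_{\varepsilon}$, the slip is local and easily repaired, but as stated the step is vacuous.

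Apart from this, your realization of the dilating cones differs from the paper's: you propose the Bishop--Phelps-type sublevel cone $P_{\varepsilon}=\{u:\ \mathrm{dist}(u,P)\le\varepsilon f(u)\}$ built from a strictly positive functional $f$, whereas the paper dilates the base itself, $P_{n}=\overline{\mathrm{cone}}(\mathcal{B}+\delta_{n}B_{X})$ with $\delta_{n}\searrow 0^{+}$. In a Banach space both do the job: one can check that your cone is closed, convex, has nonempty interior, satisfies $\bigcap_{\varepsilon>0}P_{\varepsilon}=P$ with the monotonicity needed for the limit step, and that every nonzero element of $P_{\varepsilon}^{\ast}$ is strictly positive on $P\setminus\{0\}$. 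Your cluster-point argument for $x_{\varepsilon}\to x_{0}$ is, if anything, cleaner than the paper's appeal to sequential compactness via Eberlein--\v{S}mulian, and it is the natural argument in the general locally convex setting. Note, though, that a construction based on $\mathrm{dist}$ does not transfer verbatim to a general locally convex space, while the Minkowski-sum construction does (replace $\delta_{n}B_{X}$ by a base of neighbourhoods of the origin); since the theorem is stated in that generality this is worth keeping in mind, although the paper's own sketch is likewise confined to the Banach case.
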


\begin{proof} (Sketch) We provide a sketch of proof for the special
case in which $E=X$ is a Banach space and $\Im$ is either the weak or the norm
topology of $X,$ which is actually the case that is relevant in this work. The
arguments can be easily adapted to cover the more abstract setting.\smallskip \newline Assume that $K$ is $\Im$-compact and convex, $P=\overline
{\mathrm{cone}}(\mathcal{B})$ and $\bar{x}\in\mathrm{Max}(K,P)$, that is, $\{\bar{x}\}=K\cap(\bar{x}+P)$. Then we consider the closed, convex cone
\[
P_{n}=\overline{\mathrm{cone}}(\mathcal{B}+\delta_n B_X),\qquad\text{where }\, \delta_n\searrow 0^+.
\]
It follows that for $n$ sufficiently large, $P_{n}$ has a base and $P=\bigcap_{n\geq
1}P_{n}$.\smallskip\newline
For each $n\geq1$, choose a $P_{n}$-maximal point $x_{n}
\in\mathrm{Max}(K,P_{n})$, such that
\[
x_{n}\in K_{n}:=K\,\bigcap\,(\bar{x}+P_{n}).
\]
Notice that $P=\bigcap_{n\geq1}P_{n}$ readily yields $\{\bar{x}\}=\bigcap_{n\geq 1}K_{n}$. Therefore, by the $\Im$-sequentially compactness of $K$ (if $\Im$ is the weak topology use Eberlein-\v Smulian theorem),  we easily obtain
\[
\Im\text{-}\underset{n\rightarrow\infty}{\lim}x_{n}=\bar{x}.
\]
Since $\{x_{n}\}=K\cap(x_{n}+P_{n})$ and $\mathrm{int}
(P_{n})\neq\emptyset$, there exists a functional $x^{\ast}\in P_{n}^{\ast}$ that
supports the set $K$ at the point $x_{n}$. Since $x^{\ast}$ is actually a
strictly positive functional for the original cone $P,$ the proof is complete.
\end{proof}
\medskip

\begin{remark}
 Let us mention that the authors in~\cite{GTZ2004}, working with a refinement of the notion of $P$-maximality (Henig proper maximality), were able to obtain a result in the spirit of Theorem~\ref{Thm-ABB} under weaker assumptions: nonconvex sets $K$ have been considered and the assumption of $\mathcal{F}$-compactness was replaced by $\mathcal{F}$-asymptotic compactness, see \cite[Theorem~4.1]{GTZ2004}.    
\end{remark}

In a Banach space $X$, Theorem~\ref{Thm-ABB} expresses simultaneously two different density results,
for the norm and respectively, for the weak topology. However, assuming norm compactness is very restrictive in infinite dimensions, while on the other hand, concluding only weak approximation is suboptimal. Therefore, it is desirable to obtain a strong approximation result for weakly compact sets. However, this was achieved only under additional assumptions. Jahn~\cite{Jahn88} was the
first to derive a norm approximation result for weakly compact subsets, by assuming that the cone $P$ was of ``Bishop-Phelps type''. Subsequently Petschke~\cite{Petschke90} (see also~\cite{GalSal93} for a different approach) refined Jahn's proof to conclude the same result for cones $P$ having a bounded base. More recent related references include \cites{GTZ2004, GMP21, HJ23}. \smallskip\newline 
We resume these results in a general scheme presented below. To this end, let us recall the following definition.

\begin{definition}
[Point of continuity]\label{pcon}Let $A$ be a nonempty subset of a Banach space $X.$ We say that $\bar{x}\in A$ is a \textit{point of continuity} for the set $A$ and we denote $\bar{x}\in\mathrm{PC}\bigl(A,(w,\Vert\cdot\Vert)\bigr)$, if the identity mapping $$\mathrm{Id}:(A,w)\rightarrow(A,\Vert\cdot\Vert)$$ is continuous at
$\bar{x}$.
\end{definition}

It is well-known (see, e.g., \cite{D2000}) that a closed convex
pointed cone $P$ has a bounded base if and only if $0\in\mathrm{PC}
\bigl(A,(w,\Vert\cdot\Vert)\bigr).$ In view of the above, a careful
investigation of the proof of Theorem~\ref{Thm-ABB} leads readily to the
following corollary (see also \cite{BedSon99}*{Theorem~3.1}).

\begin{corollary}
[Density result with combined topologies]\label{cor:PC}Let $K$ be a nonempty $w$-compact
convex subset of $X$, $P$ a convex closed cone with base and let $\overline{x}
\in\mathrm{Max}(K,P)$. Then $\overline{x}\in\overline{\mathrm{Pos}
(K,P)}^{\parallel\cdot\Vert}$ provided that one of the following conditions is satisfied:\smallskip
\begin{enumerate}
\item $0\in\mathrm{PC}\bigl(P,(w,\|\cdot\|)\bigr)$ (equivalently, $P$ has a bounded base); \smallskip
\item $\overline{x}\in\mathrm{PC}\bigl(K,(w,\Vert\cdot\Vert)\bigr)$.
\end{enumerate}
\end{corollary}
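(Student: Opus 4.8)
The plan is to re-examine the construction used in the proof of Theorem~\ref{Thm-ABB} and to upgrade, under either hypothesis, the weak convergence of the approximating sequence to norm convergence. Recall that that proof produces, for a sequence $\delta_n\searrow 0^+$, the dilated cones $P_n=\overline{\mathrm{cone}}(\mathcal{B}+\delta_n B_X)$ together with points $x_n\in\mathrm{Max}(K,P_n)\cap K_n$, where $K_n=K\cap(\bar x+P_n)$; each $x_n$ is supported by a functional belonging to $\mathrm{inn}\,P^*$, so that $x_n\in\mathrm{Pos}(K,P)$, and from $\{\bar x\}=\bigcap_{n\ge 1}K_n$ and the weak sequential compactness of $K$ one obtains that $x_n\to\bar x$ weakly. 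Since the sequence $(x_n)$ already lies in $\mathrm{Pos}(K,P)$, in each case it suffices to establish that $\|x_n-\bar x\|\to 0$.

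Under hypothesis (ii) this is immediate: the points $x_n$ belong to $K$ and converge weakly to $\bar x$, so the assumption $\bar x\in\mathrm{PC}(K,(w,\|\cdot\|))$, which is precisely weak-to-norm continuity of the identity of $K$ at $\bar x$, forces $\|x_n-\bar x\|\to 0$, and hence $\bar x\in\overline{\mathrm{Pos}(K,P)}^{\|\cdot\|}$.

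Under hypothesis (i) I would use that $P$ has a bounded base, which (as recalled above) is equivalent to the existence of $f\in\mathrm{inn}\,P^*$ and $\alpha>0$ with $f(u)\ge\alpha\|u\|$ for every $u\in P$; realizing the base used in the construction as $\mathcal{B}=\{u\in P:\,f(u)=1\}$, it is then bounded, say $\|b\|\le M:=1/\alpha$ for all $b\in\mathcal{B}$. The delicate point --- and the real obstacle --- is that the vectors $x_n-\bar x$ lie in the dilated cones $P_n$ and not in $P$, so one cannot invoke the point-of-continuity property of $0$ in $P$ directly. Instead I would show that the single functional $f$ provides a bounded base for all the $P_n$ \emph{uniformly in $n$}: for $w=b+\delta_n v$ with $b\in\mathcal{B}$ and $\|v\|\le 1$ one has $f(w)\ge 1-\delta_n\|f\|$ and $\|w\|\le M+\delta_n$, so as soon as $\delta_n\|f\|\le\tfrac12$ and $\delta_n\le 1$, positive homogeneity and continuity yield
\[
f(u)\ge\beta\,\|u\|\qquad(u\in P_n),\qquad\text{with }\ \beta:=\frac{1}{2(M+1)}>0
\]
independent of $n$.

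It then only remains to combine the pieces: since $x_n-\bar x\in P_n$, the uniform estimate gives $\beta\,\|x_n-\bar x\|\le f(x_n-\bar x)=f(x_n)-f(\bar x)$, and the right-hand side tends to $0$ because $x_n\to\bar x$ weakly and $f\in X^*$. Hence $\|x_n-\bar x\|\to 0$, so again $\bar x\in\overline{\mathrm{Pos}(K,P)}^{\|\cdot\|}$. Apart from this uniform-in-$n$ bounded-base estimate for the dilated cones, every step is a direct reading of the proof of Theorem~\ref{Thm-ABB}.
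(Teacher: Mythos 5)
Your proof is correct and takes essentially the approach the paper intends: the paper offers no separate argument for Corollary~\ref{cor:PC}, saying only that it follows from a careful investigation of the proof of Theorem~\ref{Thm-ABB}, and that is exactly what you carry out --- case (ii) via weak-to-norm continuity of the identity at $\bar x$, case (i) by upgrading the weak convergence of the supported points $x_n$ to norm convergence. Your uniform bounded-base estimate $f(u)\ge\beta\|u\|$ on the dilated cones $P_n$ is the detail the paper leaves implicit (equivalently, one could note that $P_n\subset P_{n_0}$ for $n\ge n_0$ and that a single slightly dilated cone already inherits a bounded base), and it correctly handles the point you rightly flag, namely that $x_n-\bar x$ lies in $P_n$ rather than in $P$.
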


\medskip

\section{Main results}
\noindent The main results are twofold. In the first subsection  we consider the same framework as in Corollary~\ref{cor:PC} (where the strong density result holds) and show that in this case, every $\bar{x}\in \mathrm{Max}(K,P)$ is also a strictly maximal element, see forthcoming definition in (\ref{eq:stmax}). This latter is  a more restrictive notion of maximality introduced to study stability and well-posedness in vector optimization (see \cites{Bednar07, CasMig10} and references therein).\smallskip\newline
In the second subsection we show that the ABB density result fails for general weakly compact convex sets in a separable Hilbert space where the ordering cone $P$ is the natural lattice cone. The reason is the lack of bounded base for this cone.
\medskip
 \subsection{Relation between $\mathrm{Max}(K,P)$ and $\mathrm{StMax}(K,P)$}
 Let us start with the definition of strict maximality
(\cites{Bednar07,CasMig10}): \smallskip
 \begin{equation}\label{eq:stmax}
\mathrm{StMax}(K,P)=\bigl\{x\in K:\,\forall\varepsilon>0,\exists
\delta>0,(P+\delta B_{X})\cap(K-x)\subset\varepsilon B_{X}
\bigr\}.
\end{equation}

\medskip

\begin{figure}
	\centering 
		\begin{tikzpicture}[scale=0.8]
			\draw[convexset]
	(0,0) .. controls + (0.6,1) and + (-1,-0.4) .. (2,2)  .. controls + (1,0.4) and + (-0.4,0.4) ..(5.69,1.69) .. controls + (0.4,-0.4) and + (0.2,1) .. (6,0) .. controls + (-0.2,-1) and +(-0.4,-4) ..(0,-2) .. controls + (0,1) and +  (0,-1) ..(0,0) -- cycle;
     
		\draw (5.69,1.69) circle[radius=1pt];
		\fill (5.69,1.69)  circle[radius=1pt];
		\node at (5.4,1.6) {\small{$0$}};
		\draw[-,dashed] (5.69,1.69) circle[radius=1.61];	
		
        \draw[dashed, line width = 1.5 pt] (5.19,1.19) arc (-135:-45:0.707);        
		\draw[-,thick, line width = 0.5 pt](5.69,1.69)--(9.69,5.69);
        \draw[-,dashed, line width = 1.5 pt](6.19,1.19)--(10.69,5.69);	
		\draw[-,thick, line width = 0.5 pt](5.69,1.69)--(1.69,5.69);
        \draw[-,dashed, line width = 1.5 pt](5.19,1.19)--(0.69,5.69);
        
		\fill[fill=gray, opacity=0.5] (5.69,1.69)--(9.69,5.69)--(1.69,5.69);
		\pattern[pattern=north east lines, opacity=0.5] (5.19,1.19)--(6.19,1.19)--(10.69,5.69)--(0.69,5.69);
        \pattern[pattern=north east lines, opacity=0.5] (5.19,1.19) arc (-135:-45:0.707);

        \draw[-,dashed, line width = 0.5] ((5.69,1.69)--(5.69,3.3);
        \node at (6,2.6) {{$\varepsilon$}};
        \draw[-,dashed, line width = 0.5] ((8,4)--(8.5,3.5);
        \node at (8.5,4) {{$\delta$}};
			
		\node at (5.69,4.5) {{$P$}};
		\node at (1.3,4) {{$P+\delta B_{X}$}};	
		\node at (2,-.4) {{$K-\bar{x}$}};
  	\end{tikzpicture}	
		\caption {Definition of strict maximality: $\bar{x} \in \mathrm{StMax}(K,P)$}  \label{FIG-1}
\end{figure}
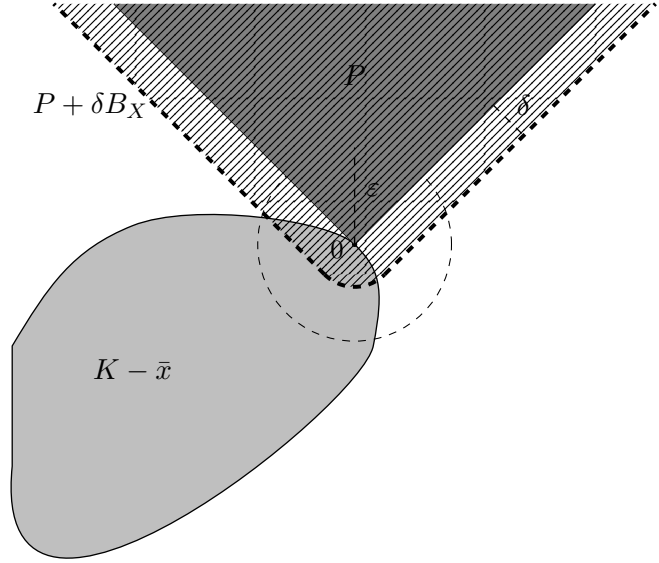

\noindent The above definition is illustrated by Figure~\ref{FIG-1}. It is easy to see that every {strictly maximal point is  maximal}, but the converse is not true (see forthcoming Proposition~\ref{example: stmax empty} for example). However, under the assumption of Corollary~\ref{cor:PC}, we will show that $\mathrm{Max}(K,P)$ and $\mathrm{StMax}(K,P)$ coincide and they are both nonempty. To do so, we need to recall the following result stated in a general locally convex space, see~\cite{Bednar07}*{Theorem~2.2.1}
\begin{theorem}[Strict maximality in locally convex spaces] 
\label{th: B} Let $E$ be locally convex topological vector space, $K\subset E$ compact convex, and $P$ a closed convex cone with base. Suppose that $x\in \mathrm{Max}(K,P)$. Then for each neighbourhood~$\mathcal{W}$ of the origin there exist a neighbourhood $\mathcal{V}$ of the origin such that $$(P+\mathcal{V})\cap(K-x)\subset \mathcal{W}.$$ 
\end{theorem}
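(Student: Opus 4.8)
The plan is to strip the statement down to the defining property of maximality and then \emph{uniformize} that property by a compactness argument. Translating by $x$, I may assume $x=0$, so that the hypothesis $x\in\mathrm{Max}(K,P)$ reads $K\cap P=\{0\}$ and the set $K-x$ in the conclusion is simply $K$. The task thus becomes: given a neighbourhood $\mathcal{W}$ of the origin, produce a neighbourhood $\mathcal{V}$ with $(P+\mathcal{V})\cap K\subset\mathcal{W}$.

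First I would argue by contradiction. Replacing $\mathcal{W}$ by its interior, I may assume there is an \emph{open} neighbourhood $\mathcal{W}_0$ of $0$ for which $(P+\mathcal{V})\cap K\not\subset\mathcal{W}_0$ for every neighbourhood $\mathcal{V}$ of $0$. Directing the filter base of neighbourhoods of the origin by reverse inclusion, I can then select, for each such $\mathcal{V}$, a point $k_{\mathcal{V}}\in(P+\mathcal{V})\cap K$ with $k_{\mathcal{V}}\notin\mathcal{W}_0$, and write $k_{\mathcal{V}}=p_{\mathcal{V}}+v_{\mathcal{V}}$ with $p_{\mathcal{V}}\in P$ and $v_{\mathcal{V}}\in\mathcal{V}$. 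This produces a net $(k_{\mathcal{V}})$ contained in the compact set $K$.

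By compactness of $K$, I pass to a convergent subnet $k_{\alpha}\to k_0\in K$. Since the subnet is cofinal in the directed set of neighbourhoods, $v_{\alpha}\to 0$, whence $p_{\alpha}=k_{\alpha}-v_{\alpha}\to k_0$; as $P$ is closed and each $p_{\alpha}\in P$, it follows that $k_0\in P$. Hence $k_0\in K\cap P=\{0\}$, i.e.\ $k_0=0$. On the other hand each $k_{\alpha}$ lies in the closed set $E\setminus\mathcal{W}_0$, so $k_0\in E\setminus\mathcal{W}_0$ and in particular $k_0\neq 0$, a contradiction.

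The delicate point is exactly this last double conclusion: the limit must land \emph{inside} $P$ (which uses the closedness of $P$ together with the vanishing of the perturbations $v_{\alpha}$) while remaining \emph{away} from $0$ (which uses that the $k_{\alpha}$ stay outside the fixed open neighbourhood $\mathcal{W}_0$). I would remark that the base of $P$ plays no role in the argument; maximality enters solely through the identity $K\cap P=\{0\}$, and it is precisely this identity that compactness upgrades to a uniform statement. If one prefers to avoid nets, the same contradiction is obtained via the finite intersection property: the sets $\overline{P+\mathcal{V}}\cap(K\setminus\mathcal{W}_0)$ form a downward directed family of nonempty closed subsets of the compact set $K\setminus\mathcal{W}_0$, so their intersection contains some $y$, which then lies in $\bigcap_{\mathcal{V}}\overline{P+\mathcal{V}}=P$ and in $K\setminus\mathcal{W}_0$, once more contradicting $K\cap P=\{0\}$.
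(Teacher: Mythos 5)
Your proof is correct. Note first that the paper itself offers no proof of this statement: it is quoted from Bednarczuk's dissertation (\cite{Bednar07}, Theorem~2.2.1), so there is no internal argument to compare against, and your write-up supplies a self-contained proof where the paper gives only a citation. Both of your variants are sound. In the net version, the reduction to $K\cap P=\{0\}$ is exactly what the paper's definition of $\mathrm{Max}(K,P)$ yields after translation; cofinality of the subnet indeed forces $v_{\alpha}\to 0$; joint continuity of addition plus closedness of $P$ then place the limit $k_0$ in $K\cap P$, while the closed set $E\setminus\mathcal{W}_0$ keeps $k_0\neq 0$, giving the contradiction. In the finite-intersection-property version, the key identity $\bigcap_{\mathcal{V}}\overline{P+\mathcal{V}}=P$ follows from the standard fact that $\overline{A}=\bigcap_{\mathcal{V}}(A+\mathcal{V})$ for any subset $A$ of a topological vector space, combined with closedness of $P$. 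Your closing remark is also correct and worth keeping: the argument uses neither the existence of a base for $P$ nor the convexity of $P$ or of $K$; only closedness of $P$, compactness of $K$, and the identity $K\cap P=\{0\}$ enter, so you in fact prove a slightly more general statement than the one quoted. This is consistent with the role that hypothesis plays elsewhere in the paper: the base assumption becomes essential only in the merely weakly compact setting of Theorem~\ref{th: stmax coincide with max}, where Proposition~\ref{example: stmax empty} shows it cannot be dropped, not in the compact setting treated here.
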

Notice that for the special case in which $E=X$ is a Banach space considered with its norm topology, we deduce from Theorem~\ref{th: B} that $\mathrm{Max}(K,P)$ coincides with $\stmax(K,P)$, whenever $K$ is (norm) compact. This is of course a very restrictive assumption in infinite dimensions. The following result remedies partially this inconvenience.

\begin{theorem}\label{th: stmax coincide with max} Let $K$ be a $w$-compact convex subset of $X$, $P$ a closed convex cone with base, and $\overline x\in \maxi(K,P)$. Then $\overline x\in \stmax(K,P)$ provided one of the following conditions is satisfied:
\begin{enumerate}
    \item $0\in\mathrm{PC}\bigl(P,(w,\|\cdot\|)\bigr)$  (equivalently, $P$ has a bounded base);
    \item $\overline{x}\in\mathrm{PC}\bigl(K,(w,\|\cdot\|)\bigr)$.
\end{enumerate}
\end{theorem}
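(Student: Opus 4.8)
The plan is to transfer the locally convex statement of Theorem~\ref{th: B} from the weak topology to a norm statement, using the point of continuity hypotheses as the bridge. First I would observe that Theorem~\ref{th: B} applies verbatim to the locally convex space $E=(X,w)$: the set $K$ is compact there by hypothesis, the cone $P$, being norm-closed and convex, is weakly closed, and $P$-maximality of $\overline x$ is a purely order-theoretic condition independent of the topology. Thus, for every weak neighbourhood $\mathcal{W}$ of the origin there is a weak neighbourhood $\mathcal{V}$ of the origin with $(P+\mathcal{V})\cap(K-\overline x)\subset \mathcal{W}$. The elementary but decisive remark is that every basic weak neighbourhood of the origin, $\mathcal{V}=\{z:\,|f_i(z)|<\eta_i,\ i=1,\dots,k\}$, contains a norm ball: indeed $\delta B_X\subset\mathcal{V}$ as soon as $\delta\,\max_i\|f_i\|<\min_i\eta_i$. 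This is what lets a norm ball $\delta B_X$ from the definition~\eqref{eq:stmax} of strict maximality enter the picture.

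Under hypothesis (ii), fix $\varepsilon>0$. Since $\overline x\in\mathrm{PC}\bigl(K,(w,\|\cdot\|)\bigr)$, there is a weak neighbourhood $\mathcal{V}_0$ of the origin such that $(K-\overline x)\cap\mathcal{V}_0\subset\varepsilon B_X$. Applying Theorem~\ref{th: B} in $(X,w)$ with $\mathcal{W}=\mathcal{V}_0$ yields a weak neighbourhood $\mathcal{V}$ with $(P+\mathcal{V})\cap(K-\overline x)\subset\mathcal{V}_0$, and by the remark above $\mathcal{V}$ contains some norm ball $\delta B_X$. Then $(P+\delta B_X)\cap(K-\overline x)$ is contained both in $K-\overline x$ and in $\mathcal{V}_0$, hence in $(K-\overline x)\cap\mathcal{V}_0\subset\varepsilon B_X$, which is exactly the defining condition of $\stmax(K,P)$ at $\overline x$.

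Under hypothesis (i), fix $\varepsilon>0$. Using $0\in\mathrm{PC}\bigl(P,(w,\|\cdot\|)\bigr)$ (equivalently, the bounded base), choose a basic weak neighbourhood $\mathcal{V}_1=\{z:\,|g_j(z)|<\eta,\ j=1,\dots,m\}$ with $P\cap\mathcal{V}_1\subset\frac{\varepsilon}{2}B_X$. Here the extra difficulty is that the point of continuity now controls the cone part, not points of $K-\overline x$, so I would apply Theorem~\ref{th: B} to the halved neighbourhood $\mathcal{W}=\{z:\,|g_j(z)|<\eta/2,\ j=1,\dots,m\}$, obtaining a weak $\mathcal{V}$ and then $\delta>0$ with $\delta B_X\subset\mathcal{V}$, $\delta\max_j\|g_j\|<\eta/2$, and $\delta\le\varepsilon/2$. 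For $y\in(P+\delta B_X)\cap(K-\overline x)$, write $y=p+b$ with $p\in P$ and $\|b\|\le\delta$; then $y\in\mathcal{W}$ gives $|g_j(y)|<\eta/2$, while $|g_j(b)|\le\|g_j\|\delta<\eta/2$, so $|g_j(p)|<\eta$, that is $p\in P\cap\mathcal{V}_1\subset\frac{\varepsilon}{2}B_X$. Hence $\|y\|\le\|p\|+\|b\|\le\varepsilon/2+\delta\le\varepsilon$, giving $(P+\delta B_X)\cap(K-\overline x)\subset\varepsilon B_X$.

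I expect the main obstacle to be precisely this last transfer in case (i): the hypothesis bounds $P\cap\mathcal{V}_1$, but the element to be estimated is $y\in K-\overline x$, so one must first localise $y$ in a weak neighbourhood via Theorem~\ref{th: B} and then pass the membership to the cone summand $p=y-b$. Choosing the neighbourhood at half the radius and matching $\delta$ to the norms $\|g_j\|$ is the device that makes this passage work; the case (ii) argument is comparatively immediate, since there the point of continuity already acts directly on $K-\overline x$.
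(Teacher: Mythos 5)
Your proof is correct, and for case (ii) it is essentially the paper's argument: both use the point-of-continuity hypothesis to produce a weak neighbourhood $\mathcal{W}$ with $(K-\overline x)\cap\mathcal{W}\subset\varepsilon B_X$, invoke Theorem~\ref{th: B} in the locally convex space $(X,w)$ (legitimate, since a norm-closed convex cone is weakly closed and its base is weakly closed as well), and observe that the resulting weak neighbourhood $\mathcal{V}$ contains a norm ball. For case (i), however, you take a genuinely different route. The paper does not use Theorem~\ref{th: B} there at all: assuming $\overline x=0$, it picks a bounded base $\mathcal{B}\subset\frac{\varepsilon}{3}B_X$ of $P$, strictly separates the weakly compact set $K$ from $\mathcal{B}$ by Hahn--Banach, obtaining $f\in S_{X^*}$ with $\sup f(K)<\alpha\leq\inf f(\mathcal{B})$ (possible because $0\in\maxi(K,P)$ forces $K\cap\mathcal{B}=\emptyset$), and then a short scaling estimate along the base shows $(P+\delta B_X)\cap K\subset\varepsilon B_X$ for any $0<\delta\leq\min\{\alpha,\varepsilon/3\}$. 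You instead reuse the case-(ii) machinery: you apply Theorem~\ref{th: B} with a halved basic weak neighbourhood and transfer the weak localisation of $y\in K-\overline x$ to its cone summand $p=y-b$ via the estimates $|g_j(p)|\leq|g_j(y)|+\|g_j\|\,\delta<\eta$, so that the point-of-continuity property of $P$ at the origin bounds $\|p\|$, and hence $\|y\|\leq\|p\|+\delta\leq\varepsilon$. This transfer is sound --- matching $\delta$ against $\max_j\|g_j\|$ is exactly what is needed --- so both of your cases go through. What each approach buys: yours unifies (i) and (ii) under a single key lemma, making transparent that both hypotheses play the identical role of upgrading the weak conclusion of Theorem~\ref{th: B} to a norm conclusion; the paper's argument for (i) is self-contained (independent of Theorem~\ref{th: B}, which is quoted from the literature), works directly with the geometric meaning of the bounded base, and produces an explicit $\delta$ in terms of the separation margin $\alpha$.
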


\begin{proof} 
We may  suppose without any loss of generality that $\overline{x}=0$. We first prove the conclusion in the case in which  (i) holds. Let $\epsilon >0$ and take a base $\mathcal{B}$ of $P$ contained in $\frac{\epsilon}{3} B_X$. By the Hahn-Banach theorem, there exists a functional $f\in S_{X^*}$ such that 
$$0=f(0)\leq\sup f(K)<\alpha\leq\inf f(\mathcal{B})\qquad \text{(see  Figure~\ref{FIG-2}).}$$

\begin{figure}
	\centering 
	
	\begin{tikzpicture}[scale=0.8]
	
		\draw[convexset]
	(0,0) .. controls + (0.6,1) and + (-1,-0.4) .. (2,2)  .. controls + (1,0.4) and + (-0.4,0.4) ..(5.69,1.69) .. controls + (0.4,-0.4) and + (0.2,1) .. (6,0) .. controls + (-0.2,-1) and +(-0.4,-4) ..(0,-2) .. controls + (0,1) and +  (0,-1) ..(0,0) -- cycle;

		\draw[-](0.5,3.7)--(9.6,.5);
		\draw (5.69,1.69) circle[radius=1pt];
		\fill (5.69,1.69)  circle[radius=1pt];
		\node at (5.05,1.6) {\small{$\overline x=0$}};
		\draw[-,dashed] (5.69,1.69) circle[radius=3];	
		
		\draw[line width = 1.2 pt](3.69,3.69)--(6.09,2.09);
	
		\draw[-,thick, line width = 0.5 pt](5.69,1.69)--(9.69,5.69);
		\draw[-,thick, line width = 0.5 pt](5.69,1.69)--(1.69,5.69);
		\fill[fill=gray, opacity=0.5] (5.69,1.69)--(9.69,5.69)--(1.69,5.69);

		\node at (5.69,5.3) {{$\overline x+P$}};
		\node at (5.2,3) {{$\mathcal B$}};	
		\node at (7.2,-.2) {{$\frac13\epsilon B_X$}};	
		\node at (1.2,2.9) {\small{$\{f=\alpha\}$}};
	
		\node at (2,-.4) {{$K$}};			
	\end{tikzpicture}
 
\caption{Proof of Theorem~\ref{th: stmax coincide with max}(i)}\label{FIG-2}
\end{figure}
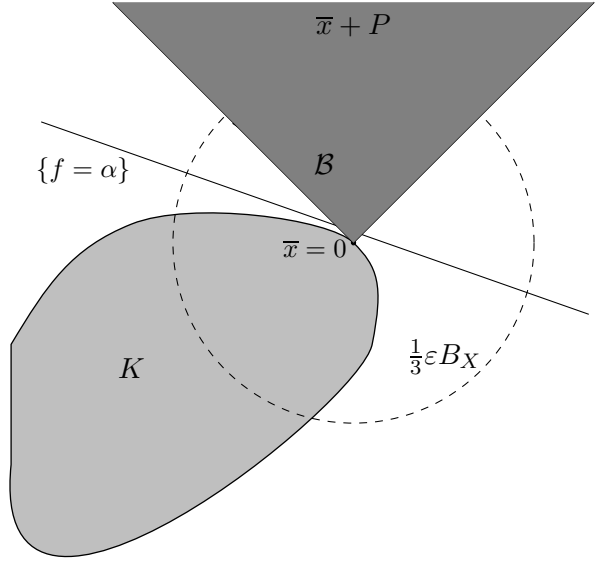

Let $0<\delta\leq\min\{\,\alpha,\, \epsilon/3\,\}$ and let us show that $(P+\delta B_X)\cap K\subset \epsilon B_X$. To do this, let $y=p+\delta b\in K$ with $p\in P$ and $b\in B_X$ and observe that $$f(p)=f(y-\delta b) <\alpha+\delta\leq2\alpha.$$ 
It follows that $f(p/2)\leq \alpha$, therefore $\frac{p}{2}\in\frac{\varepsilon}{3}B_X$ and consequently $$\|y\|\leq \|p\|+\delta\leq \epsilon.$$ The conclusion follows.

Let us now suppose that (ii) holds and let us consider $\epsilon>0$. Since $$0\in\mathrm{PC}\bigl(K,(w,\|\cdot\|)\bigr),$$ there exists a $w$-neighbourhood $\mathcal{W}$ of the origin such that 
$$0\in\mathcal{W}\cap K\subset \epsilon B_X.$$
By Theorem~\ref{th: B}, there exists a $w$-neighbourhood (in particular, a norm
neighbourhood) of the origin $\mathcal{V}$ such that
$$
(P+\mathcal{V})\cap K\subset \mathcal{W}\cap K\subset\varepsilon B_{X},
$$
and the proof is complete.
\end{proof}
\bigskip

\noindent The following proposition shows that neither of the assumptions~(i), (ii) in Theorem~\ref{th: stmax coincide with max} can be removed.

\begin{proposition}\label{example: stmax empty} Let $X=\ell^2$. Define 
$$ P=\{x=(x_n)\in X;\,  nx_1-|x_n|\geq 0,\  \forall n\geq 2\}\quad \text{and}\quad K=-P\cap B_X.$$
Then $$\maxi (K,P)=\pos(K,P)=\{0\}\quad \text{and}\quad \stmax(K,P)=\emptyset.$$
    
\end{proposition}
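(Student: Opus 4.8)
The plan is to first read off the order structure of $P$, then obtain the two equalities $\maxi(K,P)=\pos(K,P)=\{0\}$ almost for free, and finally concentrate the real work on $\stmax(K,P)=\emptyset$. Write $(e_k)_{k\ge1}$ for the standard basis of $\ell^2$ and let $f=e_1^*$ be the first coordinate functional. For $x\in P$ the constraints $nx_1\ge|x_n|\ge0$ immediately give $x_1\ge0$, and if $x_1=0$ they force $x_n=0$ for every $n\ge2$, hence $x=0$; thus $f\in\inn P^*$ (so $P$ has a base) and $P$ is pointed, $P\cap(-P)=\{0\}$. Since $B_X$ is weakly compact and $-P$ is norm-closed convex, hence weakly closed, the set $K=(-P)\cap B_X$ is weakly compact and convex, so the three sets under consideration live on a legitimate object.

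For the first equality I would work straight from the definitions. Every $x\in K\setminus\{0\}$ has $-x\in P\setminus\{0\}$, so $0=x+(-x)\in(x+P)\cap K$ with $0\ne x$; hence $x$ is dominated by $0\in K$ and is not maximal. Conversely $0\in\maxi(K,P)$, because $K\cap P=(-P)\cap P\cap B_X=\{0\}$ by pointedness. This gives $\maxi(K,P)=\{0\}$. Now $\pos(K,P)\subseteq\maxi(K,P)$ (the left inclusion of Theorem~\ref{Thm-ABB}, which is in any case elementary), so $\pos(K,P)\subseteq\{0\}$; and $0\in\pos(K,P)$ since $f=e_1^*$ is nonpositive on $K\subseteq-P$ while $f(0)=0=\sup f(K)$.

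The substantive part is $\stmax(K,P)=\emptyset$. As strict maximality implies maximality, $\stmax(K,P)\subseteq\maxi(K,P)=\{0\}$, so it suffices to show $0\notin\stmax(K,P)$: I must produce one $\ep>0$ such that no $\delta>0$ satisfies $(P+\delta B_X)\cap K\subseteq\ep B_X$. The governing idea, and precisely where the lack of a bounded base bites, is that $K$ contains points of norm bounded away from $0$ lying arbitrarily close to $P$. I would take, for $n\ge2$,
\[
y^{(n)}=-\tfrac{1}{2n}\,e_1+\tfrac12\,e_n,\qquad p^{(n)}=\tfrac{1}{2n}\,e_1+\tfrac12\,e_n,
\]
and verify that $y^{(n)}\in K$ (indeed $-y^{(n)}\in P$ since $\tfrac12\le n\cdot\tfrac1{2n}$, and $\|y^{(n)}\|<1$), that $p^{(n)}\in P$ for the same reason, and that $\|y^{(n)}-p^{(n)}\|=\tfrac1n$. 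Hence $y^{(n)}\in P+\tfrac1n B_X$ while $\|y^{(n)}\|>\tfrac12$. Fixing $\ep=\tfrac12$ and any $\delta>0$ and choosing $n>1/\delta$ then yields a point of $(P+\delta B_X)\cap K$ outside $\ep B_X$, so $0\notin\stmax(K,P)$ and the claim follows.

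I expect the only genuine difficulty to be isolating this family $\{y^{(n)}\}$, which must simultaneously meet three competing demands: membership in $K$ (forcing $|x_m|\le m|x_1|$ and norm at most $1$), distance to $P$ tending to $0$, and norm bounded below. Placing a spike of fixed height $\tfrac12$ in the coordinate $e_n$ with $n\to\infty$, while keeping the first coordinate at the minimal admissible size $\tfrac1{2n}$, reconciles all three at once; this is exactly the mechanism by which the unboundedness of the base of $P$ defeats even strict maximality.
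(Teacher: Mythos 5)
Your proposal is correct and follows essentially the same route as the paper: identify $0$ as the unique maximal point via pointedness of $P$, support it by the first coordinate functional to get $\pos(K,P)=\{0\}$, and defeat strict maximality with spikes $-\tfrac{1}{2n}e_1+\tfrac12 e_n$ whose distance to $P$ (obtained by flipping the sign of the first coordinate) tends to $0$ while their norms stay above $\tfrac12$. Up to reindexing ($e_n$ versus $e_{n+1}$), your sequence $y^{(n)}$ and comparison points $p^{(n)}$ are exactly the paper's $z^n$ and $w^n$.
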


\begin{proof}
Since the cone is pointed, we have $P\cap K\subset P\cap (-P)=\{0\}$ and consequently $0\in\maxi(K,P)$. We claim that the origin is the unique element of the set $\maxi(K,P)$. Indeed, if $\overline{x}\in K\setminus \{0\}$ then $\overline{x}\in -P$ and consequently $0\in\overline{x}+P$, proving the claim. Moreover, it is easily seen that $e_1\in\inn P^*$ and  $\sup e_1(K)=0$. Therefore we deduce that $0\in\pos(K,P)$. \smallskip\newline Let us now prove that $0\not\in \stmax(K,P)$. To do this, let $n\in\N$ and define the sequence $$z^n=(z^n_k)_{k}\in \ell^2$$ by
        $$\textstyle z^n_k=\begin{cases}

-\frac{1}{2 (n+1)}\,, &\text{if}\  k=1 \smallskip \\
\phantom{--}{\frac{1}{2}}\,, &\text{if}\  k= n+1 \smallskip \\
\phantom{--}0\,, &\text{if}\  k\notin \{1, n+1\}
        \end{cases}.$$
Then it is easy to see that $\{z^n\}_n\subset K$. For every $n\geq 1$, let us set 
\[
w^n_1=-z^n_1=\frac{1}{2(n+1)} \quad \text{and}\quad w^n_k=z^n_k,\,\text{ for }\, k\neq 1.
\]
Then $\{w^n\}_n \subset P$ and 
$\dist(z^n, P)\leq \|z^n-w^n\|=\frac{1}{n+1} \to 0$ (as $n\to\infty$)
whereas $$\|z^n\|^2=\frac{1}{4(n+1)^2}+\frac{1}{4}\geq\frac14,\quad \text{for all }\, n\geq 1.$$ The claim follows.
\end{proof}
\bigskip

\noindent We shall now present an example showing that in Theorem~\ref{th: stmax coincide with max},  weak compactness of the set $K$ cannot be replaced by the weaker assumption that the set $K$  is weakly closed and bounded. It is a slight modification of  \cite{CasMig10}*{Example~4.6}.

\begin{example}\label{ex:bounded in ell_1}
Let $X=\ell^1$ equipped with its lattice cone $P=\ell^1_+$. It is worth pointing out that $P$ has both bounded and unbounded bases. {Indeed, let us consider a strictly positive functional $f=(f_n)_{n\geq 1}\in \ell^\infty_{++}\equiv \mathrm{inn}(\ell^{1}_{+})^{\ast}$. If there exists $\alpha >0$ such that $f_n\geq \alpha$ for every $n\geq 1$, then $f^{-1}(1)\cap P$ is a bounded base for $\ell^1_+$, otherwise $f$ generates an unbounded base.} In particular, taking a functional of the latter type:
\[
f:=(1/n)_{n\geq 1} \in \mathrm{inn}(\ell^1_+)^{\ast}
\]
the set $f^{-1}(1)\cap P$ is an unbounded base for $P$. We fix this cone and consider the closed convex bounded set 
\[
K=\{x\in \ell^1: -1\leq f(x) \leq 0\} \,\bigcap\, 2B_X.
\]
It follows directly that $0\in \mathrm{Pos}(K,P)$ and \textit{a fortiori} $0\in \mathrm{Max}(K,P)$. \smallskip\newline
On the other hand, taking $x^n := e_n - \frac{1}{n} e_1\in \frac{1}{n}B_X+P$, for all $n\in\N$, where $\{e_n \}_n$ is the standard unit-vector basis of $\ell^1$, we deduce that $\{x^n\}_n \subset K$ and $\|x^n \|\geq 1$ for every $n\in\N$. This shows that $0\notin \mathrm{StMax}(K,P)$.    
\end{example}
\medskip

\noindent The following proposition shows that if the cone $P$ has a bounded base (as was the case in the previous example), the set $\mathrm{StMax}(K,P)$ is always nonempty. Therefore, in Example~\ref{ex:bounded in ell_1}, the set of strict maxima is nonempty (and strictly contained in $\mathrm{Max}(K,P)$). This being said, in forthcoming Proposition~\ref{prop: renorming of c_0} we shall see that if $P$ does not have a bounded base, the set of strict maxima can be empty. \smallskip\newline
We recall that a functional $f\in X^*\setminus \{0\}$ is called a supporting functional of $K$ at $x_0$ if $f(x_0)=\sup f(K)$.

\begin{proposition}[existence of strict maxima for a bounded based cone]\label{th: stmax nonempty} Let $K$ be a nonempty closed convex bounded subset of a Banach space~$X$ and $P$ a convex closed cone with a bounded base. Then $\stmax(K,P)\neq \emptyset$.
\end{proposition}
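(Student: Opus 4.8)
The plan is to reduce the statement to a support/attainment problem via the standard reformulation of the bounded base hypothesis recalled in the introduction. Since $P$ has a bounded base, the dual cone $P^*$ has nonempty interior, and choosing $f\in\inn P^*$ interior to $P^*$ yields a constant $\alpha>0$ with
\[
f(p)\ge\alpha\,\norm{p}\qquad\text{for every }p\in P .
\]
Indeed, if $f+\alpha B_{X^*}\subset P^*$, then for $p\in P$ with $\norm p=1$ one picks $b\in B_{X^*}$ with $b(p)=-1$, so that $(f+\alpha b)(p)\ge0$ gives $f(p)\ge\alpha$; homogeneity does the rest. With such a \emph{strongly positive} functional available, the argument splits into two ingredients: first, I show that any point of $K$ at which \emph{some} strongly positive functional attains its supremum over $K$ is automatically a strict maximum; second, I produce such a point using the Bishop--Phelps theorem.

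For the first ingredient, suppose $g\in X^*$ satisfies $g(p)\ge\beta\norm p$ on $P$ for some $\beta>0$ and attains its supremum over $K$ at $\bar x\in K$, i.e. $g(\bar x)=\sup g(K)$. Given $\varepsilon>0$, I set $\delta:=\varepsilon\bigl(\norm g/\beta+1\bigr)^{-1}$ and verify the defining inclusion (\ref{eq:stmax}). Take $y\in(P+\delta B_X)\cap(K-\bar x)$ and write $y=p+\delta b=k-\bar x$ with $p\in P$, $\norm b\le1$, $k\in K$. On the one hand $g(y)=g(k)-g(\bar x)\le0$; on the other hand $g(y)=g(p)+\delta g(b)\ge\beta\norm p-\delta\norm g$. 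Combining the two bounds gives $\norm p\le\delta\norm g/\beta$, whence
\[
\norm y\le\norm p+\delta\le\delta\bigl(\norm g/\beta+1\bigr)=\varepsilon ,
\]
so $(P+\delta B_X)\cap(K-\bar x)\subset\varepsilon B_X$ and therefore $\bar x\in\stmax(K,P)$.

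The only gap is that $f$ itself need not attain its supremum over $K$, because $K$ is merely bounded, closed and convex rather than weakly compact; this is exactly the point I expect to be the main obstacle, and it is where the Bishop--Phelps theorem enters. Since the support functionals of the bounded closed convex set $K$ are norm-dense in $X^*$, I can choose $g\in X^*$ with $\norm{g-f}<\alpha/2$ that attains its supremum over $K$ at some $\bar x\in K$. Such a $g$ remains strongly positive, since for $p\in P$
\[
g(p)=f(p)+(g-f)(p)\ge\alpha\norm p-\tfrac{\alpha}{2}\norm p=\tfrac{\alpha}{2}\norm p ,
\]
so the first ingredient applies with $\beta=\alpha/2$, yielding $\bar x\in\stmax(K,P)$ and hence $\stmax(K,P)\neq\emptyset$. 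The perturbation resolves the difficulty cleanly precisely because strong positivity is an open condition on $X^*$ and therefore survives a small norm perturbation; everything else reduces to the elementary estimate above.
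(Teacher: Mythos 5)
Your proof is correct, and its skeleton coincides with the paper's: both arguments pass from the bounded base hypothesis to $\mathrm{int}\,P^{*}\neq\emptyset$, and both invoke the Bishop--Phelps theorem for the nonempty closed convex bounded set $K$ to produce a point $\bar x\in K$ supported by a functional that is strongly positive on $P$, i.e.\ a point of $\pos(K,P)$. The difference is in the final step. The paper concludes $\bar x\in\stmax(K,P)$ by citing \cite{CasMig10}*{Corollary~5.4} as a black box, whereas you prove that implication from scratch: from $g(p)\ge\beta\norm{p}$ on $P$ and $g(k)\le g(\bar x)$ for all $k\in K$ you extract the explicit modulus $\delta=\varepsilon\bigl(\norm{g}/\beta+1\bigr)^{-1}$ verifying the inclusion in \eqref{eq:stmax}. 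What your route buys is self-containedness --- the external corollary is replaced by a three-line estimate --- and the estimate is moreover in the same spirit as the paper's own proof of Theorem~\ref{th: stmax coincide with max}(i), where a separating functional plays the role of your $g$. A cosmetic difference: the paper places the supporting functional directly inside $\mathrm{int}\,P^{*}$ (density of support functionals plus openness of $\mathrm{int}\,P^{*}$), while you fix $f\in\mathrm{int}\,P^{*}$ first and then perturb to a support functional $g$ with $\norm{g-f}<\alpha/2$; as you observe, strong positivity survives the perturbation (indeed $g+\tfrac{\alpha}{2}B_{X^*}\subset f+\alpha B_{X^*}\subset P^{*}$, so your $g$ lies in $\mathrm{int}\,P^{*}$ as well), so the two formulations are equivalent.
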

\begin{proof} Since $P$ has a bounded base, then
$\mathrm{int}P^{\ast}$ is nonempty. Hence, by Bishop-Phelps theorem (see, e.g., \cite{FHHMPZ2001}*{Theorem 7.41}), there exists $x^*_0\in\mathrm{int}P^{\ast}$ that is a supporting functional for $K$ at a point $x_0\in K$, i.e., $x_0 \in \mathrm{Pos}(K,P)$. By  \cite{CasMig10}*{Corollary~5.4}, we conclude that $x_0\in \stmax(K,P)$.
\end{proof}
\medskip

\noindent Let us further denote by $c_0:=\{x=(x_n)_n\in\R^{\N}:\, \underset{n\to \infty}{\lim} x_n =0 \}$ the Banach space of all null sequences from $\N$ to $\R$ equipped with the norm $\|x\|_{\infty}:=\underset{n\geq 1}{\sup}\,|x_n|$. Let us consider the linear operator $T:c_0\rightarrow \ell^2$ defined for every $x\in c_0$ as follows:  
\begin{equation}\label{def-op-T}
x:=(x_n) \,\mapsto \, T(x):= \left(\frac{x_n}{2^n}\right)_{n\geq1}\in\ell^{2}.
\end{equation}
Notice that $T$ is injective, continuous and 
\[
\|Tx\|_2= \sqrt{\sum_{n\geq 1}\frac{|x_n|^2}{4^n}} \, \leq \, \left(1/\sqrt{3}\right) \, \|x\|_{\infty}\,.
\]
Therefore, 
\begin{equation}\label{eq:equiv-norm}
|||x||| \,:=\,\|x\|_{\infty} \, + \, \|Tx\|_{2}
\end{equation}
is an equivalent norm on $c_0$. \medskip\newline
\noindent We are now ready to provide an example where the set 
$\mathrm{StMax}(K,P)$ is empty, even if $\mathrm{Max}(K,P)$ is nonempty, showing the pertinence of the assumption that the cone $P$ has a bounded base in Proposition~\ref{th: stmax nonempty}. 

\begin{proposition}[example where $\mathrm{StMax}(K,P)$ is empty]\label{prop: renorming of c_0}
Consider the Banach space $X=(c_0,|||\cdot|||)$, where $|||\cdot|||$ is the equivalent norm defined in \eqref{eq:equiv-norm}. Consider further the closed convex bounded set $K=B_X$ and the lattice cone
    $$P:=(c_0)_+ =\{ x=(x_n)\in c_0:\, x_n\geq 0,\ \text{for all }   n\geq 1 \}.$$
Then 
\[
\mathrm{StMax}(K,P)=\emptyset \qquad \text{and} \qquad \mathrm{Max}(K,P)\neq\emptyset.
\]
 \end{proposition}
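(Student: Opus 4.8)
The plan is to treat the two claims by independent arguments, producing an explicit maximal point for one and a uniform ``escaping'' construction for the other. It is worth noting at the outset that, since $X=(c_0,\norma{\cdot})$ is a renorming of a non-reflexive space, $K=B_X$ is \emph{not} weakly compact, so the nonemptiness of $\maxi(K,P)$ cannot be deduced from Theorem~\ref{Thm-ABB}; I would therefore exhibit a maximal point by hand.

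For $\maxi(K,P)\neq\emptyset$ I propose the candidate $\bar x=\tfrac23 e_1$, which lies on the unit sphere since $\norma{\bar x}=\|\bar x\|_\infty+\|T\bar x\|_2=\tfrac23+\tfrac13=1$. To verify maximality, suppose $y\in K$ satisfies $y\succeq\bar x$, that is $y_1\ge\tfrac23$ and $y_n\ge0$ for $n\ge2$. Then $\|y\|_\infty\ge y_1\ge\tfrac23$ and $\|Ty\|_2\ge y_1/2\ge\tfrac13$, whence $\norma{y}\ge1$; as $y\in K$ forces $\norma{y}\le1$, both estimates must be equalities, which pins down $y_1=\tfrac23$ and $y_n=0$ for all $n\ge2$, i.e. $y=\bar x$. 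Hence $K\cap(\bar x+P)=\{\bar x\}$ and $\bar x\in\maxi(K,P)$.

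For $\stmax(K,P)=\emptyset$ I would show directly that \emph{every} $x\in K$ fails the strict-maximality condition, so that one need not first identify the maximal points. Fix $x$ and set $M=\|x\|_\infty$; the case $x=0$ is immediate, since $y_k=\tfrac12 e_k\in K\cap P$ already gives $\dist(y_k,P)=0$ with $\norma{y_k}\ge\tfrac12$. For $x\neq0$ we have $0<M<1$, and because $x\in c_0$ the value $M$ is attained at some fixed coordinate while $x_m\to0$. The key construction is to replace, for a large index $m$, the $m$-th coordinate of $x$ by $M$: set $y^{(m)}=x+(M-x_m)e_m$. The bump $(M-x_m)e_m$ belongs to $P$, leaves the sup-norm equal to $M$ (the new coordinate equals $M$, all others keep their value $\le M$, and $M$ is attained at some coordinate $\neq m$), and costs only $\le 2M/2^m$ in the $\ell^2$-part since $\|Te_m\|_2=2^{-m}$; thus $\norma{y^{(m)}}\le\norma{x}+2M/2^m\to1$. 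Rescaling by $\lambda_m=\min\{1,\norma{y^{(m)}}^{-1}\}\nearrow1$ then produces $y_k:=\lambda_{m_k}y^{(m_k)}\in K$ for any $m_k\to\infty$.

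It remains to verify the two properties of $z_k:=y_k-x=(\lambda_{m_k}-1)x+\lambda_{m_k}(M-x_{m_k})e_{m_k}$. Its $m_k$-th coordinate equals $\lambda_{m_k}M-x_{m_k}\to M$, so $\norma{z_k}\ge\|z_k\|_\infty\ge M/2$ for $k$ large, keeping $z_k$ bounded away from the origin; meanwhile the second summand lies in $P$ and the first satisfies $\norma{(\lambda_{m_k}-1)x}=(1-\lambda_{m_k})\norma{x}\to0$, so $\dist(z_k,P)\to0$. Taking $\varepsilon=M/4$ then exhibits, for each $\delta>0$, a point of $(P+\delta B_X)\cap(K-x)$ lying outside $\varepsilon B_X$, whence $x\notin\stmax(K,P)$. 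I expect the only genuinely delicate point to be this ``room management'': the direction $e_m$ has sup-norm $1$, so a bounded bump is truly large for $\norma{\cdot}$, yet $Te_m\to0$ makes the very same bump almost free for membership in $K$, and the mild rescaling $\lambda_m\to1$ absorbs the residual overflow without spoiling either the lower bound on $\norma{z_k}$ or the convergence $\dist(z_k,P)\to0$.
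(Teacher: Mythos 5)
Your proof is correct and follows essentially the same route as the paper: the same witness $\frac{2}{3}e_1$ for $\maxi(K,P)\neq\emptyset$, and for $\stmax(K,P)=\emptyset$ the same mechanism of bumping a far coordinate $e_m$ (unit-sized for $\|\cdot\|_\infty$ but nearly free for $\|T\cdot\|_2$), rescaling back into $K$, and observing that the displacement stays bounded away from $0$ in norm while its distance to $P$ vanishes. The only cosmetic differences are that you treat every point of $K$ directly with an adaptive bump height $M-x_m$, whereas the paper first reduces to maximal points of the sphere and uses a fixed bump $\frac{1}{2\alpha}e_n$ together with the maximality inequality $|||x+p|||>1$.
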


\begin{proof}
Let us denote by $\{e_n\}_n$ the canonical basis of $c_0$ and set $\alpha:= 1+1/\sqrt{3}$. It follows that
$$\|x\|_{\infty}\, \leq |||x||| \, \leq \alpha \,\|x\|_{\infty},\quad\text{for all } x\in c_0. $$
Set $\bar x:=\frac{2}{3} e_1$ and notice that for all $p\in P\setminus\{0\}$ we have $|||\bar x||| = 1 < |||\bar x + p |||$.
It follows readily that $\bar x = \frac{2}{3} e_1\in \mathrm{Max}(K,P)$, therefore the set of maxima of $K$ is not empty. \smallskip\newline
\noindent It remains to show that $\mathrm{StMax}(K,P)=\emptyset$. It is sufficient to prove that no point in the boundary $S_X$ of $K$ can be in $\mathrm{StMax}(K,P)$. \smallskip\newline
To this end, let $x=(x_n)\in S_X$ (that is, $|||x|||=1$). Since $\stmax(K,P)\subset\maxi(K,P)$ we can clearly also assume that $x\in\maxi(K,P)$. Therefore 
\[
|||x|||=1 \, < \, |||x+ p|||,\quad \text{ for all }\, p\in P\setminus\{0\}.
\]
Notice further that $\|x\|_{\infty}\geq 1/{\alpha}$. Therefore, since $x\in c_0$, there exists $n_0\in\N$ such that $|x_n|\leq 1/{2\alpha}$, for all $n\geq n_0$. Setting $$p_n:=(1/2{\alpha})\,e_n \in P$$ we have:
\[
\textstyle 
|||x|||\,=\,1 \,< \, |||x+p_n||| :=\, \|x +\frac{1}{2\alpha}e_n\|_{\infty} \, +\,\left(\underset{k\in\N\setminus\{n\}}{\sum} (x_k^2/4^k) \,+\,\frac{1}{4^n}\left(\,x_n+\frac{1}{2\alpha}\right)^2\right)^{1/2}.
\]
Notice that $\|x +\frac{1}{2\alpha}e_n\|_{\infty}=\|x\|_{\infty}$, for $n\geq n_0$ and that $\beta_n:=|||x+\frac{1}{2\alpha}e_n|||\to 1$. Set $$z_n=\frac{1}{\beta_n}(x+p_n)\in B_X\equiv K$$
and notice that $d(z_n,x+P)\,\leq\, |||z_n-(x+p_n)|||\to 0$ (since $\beta_n\to1$). \smallskip\newline
On the other hand, by the triangular inequality we obtain
\[ 
|||(x+p_n)-z_n|||\, +\, |||z_n-x |||\, \geq\, |||p_n||| \, >\, \|p_n\|_{\infty} = \frac{1}{2\alpha}\,.
\]
Since $|||(x+p_n)-z_n||| \to 0$, we deduce that 
$z_n-x \notin (1/2\alpha)U_X$, for all $n$ sufficiently large, whence $x\not\in\mathrm{StMax}(K,P)$. \end{proof}


\subsection{Failure of approximation of $\mathrm{Max}(K,P)$ by $\mathrm{Pos}(K,P)$ }

In this subsection we construct an example showing that the strong version of the ABB theorem fails to hold. This outlines the pertinence of the assumptions of all known infinite dimensional versions of the result, revealing in particular  that the assumptions  in  Corollary~\ref{cor:PC} cannot be removed. \smallskip\newline
Indeed, we construct a weakly compact convex set in $\ell^2$ with an isolated maximal point which is not a point of continuity and cannot be supported by a strictly positive functional with respect to the natural ordering cone $\ell^2_+$. It is worth pointing out that the set $K$ in this example will have nonempty interior. On the other hand, the cone $P$ does not (and cannot) have any bounded base.

\begin{proposition}[failure of ABB strong approximation]\label{example: max isolated}  Let $X=\ell^{2}$, $P=\ell_{+}^{2}$, and
$$ K=\{x=(x_n)\in X;\,  x_1+x_n^2\leq 0,\  \forall n\geq 2\}.$$
Then $$\maxi (K,P)\cap U_X=\{0\}\quad \text{and}\quad \stmax (K,P)\cap U_X=\pos (K,P)\cap U_X=\emptyset.$$
    \end{proposition}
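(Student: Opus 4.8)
The plan is to pin down $\maxi(K,P)$ exactly and then exploit the inclusions $\pos(K,P)\subseteq\maxi(K,P)$ (from Theorem~\ref{Thm-ABB}) and $\stmax(K,P)\subseteq\maxi(K,P)$ to reduce the two remaining claims to the single point $0$. First I would prove the stronger statement $\maxi(K,P)=\{0\}$. That $0\in\maxi(K,P)$ is immediate: if $x\in K\cap P$ then $x_n\ge0$ for all $n$ while $x_1\le -x_n^2\le0$, which forces $x_1=0$ and hence $x_n=0$ for $n\ge2$, so $K\cap P=\{0\}$. Conversely, suppose $\bar x\in\maxi(K,P)$ with $\bar x\neq0$. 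From $\bar x\in K$ we have $\bar x_1\le-\bar x_n^2\le0$ for every $n\ge2$. If $\bar x_1<0$, then since $\bar x\in\ell^2$ we have $\bar x_m\to0$, so there is an index $m\ge2$ with $\bar x_m^2<-\bar x_1$; as $t\mapsto(\bar x_m+t)^2$ is continuous and equals $\bar x_m^2<-\bar x_1$ at $t=0$, the point $\bar x+te_m$ still satisfies every defining inequality of $K$ for small $t>0$, giving $\bar x+te_m\in\big((\bar x+P)\cap K\big)\setminus\{\bar x\}$ and contradicting maximality. If instead $\bar x_1=0$, then $\bar x_n^2\le0$ forces $\bar x=0$, again a contradiction. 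Thus $\maxi(K,P)=\{0\}$, and in particular $\maxi(K,P)\cap U_X=\{0\}$.

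For $\pos(K,P)\cap U_X$, the inclusion $\pos(K,P)\subseteq\maxi(K,P)$ gives $\pos(K,P)\cap U_X\subseteq\{0\}$, so it suffices to check that $0\notin\pos(K,P)$. Identifying $(\ell^2)^*$ with $\ell^2$, one has $\inn P^*=\{f\in\ell^2:\ f_n>0\ \forall n\}$. For any such $f$ and any $m\ge2$, the parabolic point $x^{(t,m)}:=-t^2e_1+te_m$ lies in $K$ (its only nontrivial constraint, at $n=m$, reads $-t^2+t^2=0\le0$), and $f(x^{(t,m)})=t(f_m-tf_1)>0=f(0)$ for $0<t<f_m/f_1$. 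Hence $\sup f(K)>f(0)$, so no $f\in\inn P^*$ supports $K$ at $0$; therefore $0\notin\pos(K,P)$ and $\pos(K,P)\cap U_X=\emptyset$.

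For $\stmax(K,P)\cap U_X$, the inclusion $\stmax(K,P)\subseteq\maxi(K,P)$ again reduces everything to showing $0\notin\stmax(K,P)$, i.e. that for some $\epsilon>0$ the condition in~\eqref{eq:stmax} fails for every $\delta>0$. This is where infinite dimensionality (equivalently, the absence of a bounded base) is essential: I would spread mass over many coordinates by setting
$$ x^{(N)}:=-\tfrac1N e_1+\tfrac{1}{\sqrt N}\sum_{j=2}^{N+1}e_j . $$
A direct check gives $x^{(N)}\in K$, while $\dist(x^{(N)},P)\le\big\|\tfrac1N e_1\big\|=\tfrac1N\to0$ (the negative part sits only in the first coordinate) and yet $\|x^{(N)}\|^2=\tfrac1{N^2}+1\ge1$. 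Taking $\epsilon=\tfrac12$ and, given $\delta>0$, any $N>1/\delta$, we obtain $x^{(N)}\in(P+\delta B_X)\cap K$ with $\|x^{(N)}\|\ge1>\epsilon$, so $(P+\delta B_X)\cap(K-0)\not\subseteq\epsilon B_X$. Hence $0\notin\stmax(K,P)$ and $\stmax(K,P)\cap U_X=\emptyset$.

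I expect the main obstacle to be the design of the sequence $x^{(N)}$: one must exploit the quadratic gap between the $e_1$-penalty (of order $a^2$, controlling the distance to $P$) and the linear room available in the remaining coordinates (of order $a$ each, over $N$ of them, controlling the norm), which is precisely the mechanism that the squared term $x_n^2$ in the definition of $K$ is engineered to provide. The maximality computation, by contrast, is routine once one observes that membership in $\ell^2$ prevents the ``active'' constraints $\bar x_n^2=-\bar x_1$ from persisting as $n\to\infty$, leaving room to push some high coordinate upward.
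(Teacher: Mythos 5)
Your proof is correct and follows essentially the same route as the paper's: a coordinate perturbation (exploiting $x_n\to 0$) to show $0$ is the only maximal point, parabolic test vectors $-t^2e_1+te_m$ to rule out support by strictly positive functionals, and a sequence spreading mass over $N$ coordinates (small $e_1$-penalty, unit norm) to defeat strict maximality. The only, harmless, difference is that you prove the slightly stronger global statement $\maxi(K,P)=\{0\}$, whereas the paper states and proves the claim only inside $U_X$.
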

    \begin{proof}
        Let us notice that $$K\cap\{x=(x_n)\in X;\, x_1\geq 0\}=\{0\}.$$
        Indeed, let $x=(x_n)\in K\setminus \{0\}$, then by definition we have $x_1<0$ and consequently $x$ cannot be maximal for $P=\ell_{+}^{2}$. On the other hand, we readily have $0\in\maxi(K,P)$. \smallskip\newline
        We claim that $0\not\in\pos(K,P)$. Indeed, let $y=(y_n)\in\inn P^*=\ell^2_{++}$ be arbitrarily chosen. Then taking $\alpha>0$ sufficiently small, the point $x=(-\alpha, \sqrt{\alpha},0,\ldots)$ belongs to $K$ and $y(x):=-\alpha y_1+\sqrt{\alpha}y_2>0=y(0)$, showing that $y$ cannot support $K$ at $0$. \smallskip\newline
        Let us now claim that $0\not\in\stmax(K,P)$. To prove this, let $n\in\N$ and define $z^n=(z^n_k)_k\in \ell^2$ by
        $$\textstyle z^n_k=\begin{cases}

-\frac{1}{\sqrt2}\,\frac{1}{n}, &\text{if}\  k=1 \smallskip \\
\phantom{-}\frac{1}{\sqrt{2}}\,\frac{1}{\sqrt{n}}, &\text{if}\  k\in\{2,\ldots, n+1\} \smallskip \\
\phantom{--}0, &\text{if}\  k>n+1
        \end{cases}.$$
Then it is easy to see that $\{z^n\}\subset K$. Setting $w^n_k=\max\{z^n_k,0\}$ for all $n,k\geq 1$, we have $\{w^n\}_n\subset P$ and $\dist(z^n, P)\leq \|z^n-w^n\| \to 0$ (as $n\to\infty$), whereas 
\[
\|z^n\|^2=\frac{1}{2n^2}+n\frac{1}{2n}\geq\frac{1}{2},\quad\text{for } \,n\ge 1.
\]
Therefore the claim follows.\smallskip\newline
        Let us prove that $\maxi (K,P)\cap U_X=\{0\}$. To do this, let $x=(x_n)\in K $ be such that $x_1<0$ and $x\in U_X$. Then the set 
        $$\mathcal{N}_x:=\{n\in\N;\, n\geq 2,\, x_n^2=|x_1|  \}$$
        is finite (and possibly empty). Take $n_0>1$ such that $n_0\not\in\mathcal{N}_x$. Then for $\epsilon>0$ sufficiently small we have $|x_1|>(|x_{n_0}|+\epsilon)^2$. It follows that $$J:=[x-\epsilon e_{n_0},x+\epsilon e_{n_0}]\subset K,$$
        and consequently $x\not\in\maxi (K,P)$, since necessarily $x+P$ intersects $J\setminus\{x\}$.
        The fact that $$\stmax (K,P)\cap U_X=\pos (K,P)\cap U_X=\emptyset$$ follows directly from the inclusion $$\stmax (K,P)\cup \pos (K,P)\subset \maxi (K,P).$$
The proof is complete.    \end{proof}

\bigskip

\begin{remark} It is well-known that the closed unit ball $B$ of 
$\ell^2$ has the Kadets-Klee property, which guarantees that every boundary point is a point of continuity from the weak to the norm topology. The set $K$ in Proposition~\ref{example: max isolated} is a subset of $B$, containing the basic vector $e_1:=(1,0,\dots)$ and constructed in a way that the part around $e_1$ is sufficiently flattered so that the Kadets-Klee property fails and at the same time there is no other maximal element near~$e_1$.     
\end{remark}
\bigskip

\paragraph*{\bf Acknowledgement}
A major part of this research has been realized during a research visit of the first author to the Università Cattolica del Sacro Cuore (Milano, June 2024). This author thanks his hosts for hospitality. The research of the first author was supported by the INdAM 
(GNAMPA, Professore Visitatore) and by the Austrian Science Fund (FWF) [10.55776/P36344]. The research of the second and the third author was supported by the INdAM -
GNAMPA Project,  CUP E53C23001670001, and by MICINN project PID2020-112491GB-I00 (Spain).

\end{document}